\documentclass{amsart}

\usepackage{enumerate}
\usepackage{amsmath,amsfonts,amssymb,amsthm}
\usepackage{csquotes}
\usepackage{graphicx,amsaddr}

\newtheorem{thm}{Theorem}[section]
\newtheorem{lem}[thm]{Lemma}
\newtheorem{prop}[thm]{Proposition}
\newtheorem{cor}[thm]{Corollary}

\theoremstyle{definition}
\newtheorem{defn}[thm]{Definition}
\newtheorem{exam}[thm]{Example}

\numberwithin{equation}{section}

\title[J. G. Patel]
{A Class of algebras admitting infinitely many norm topologies}

\author{J. G. Patel*}
\address{Dept. of Applied Science and Humanities, Parul University, Limda 391760, Vadodara, Gujarat, India}
\email{jatinkumar.patel33697@paruluniversity.ac.in}

\begin{document}
	
	\subjclass[2020]{46H05, 46H25.}
	
	\keywords{Algebra, Banach Algebra, Norm }
	
\begin{abstract}
Let $\mathcal{A}$ be an algebra, and let $\mathcal{A}^2 =$ span$\{ab : a, b \in \mathcal{A}\}$ be a subalgebra of $\mathcal{A}$. In this paper, we prove that if $\mathcal{A}^2$ has infinite codimension in  $\mathcal{A}$ iff $\mathcal{A}$ has discontinuous square annihilation property (DSAP). In fact, in this case, the algebra
$\mathcal{A}$ admits infinitely many non-equivalent algebra norms.
\end{abstract}
\maketitle
\section{Introduction}
Throughout $\mathcal{A}$ is an (associative) algebra over the complex field $\mathbb{C}$. A norm on $\mathcal{A}$, means it is a linear norm and submultiplicative (i.e., $\| ab \| \leq \|a\| \|b\| \ \ (a,b \in \mathcal{A})$). An equivalent norms means the metric topology induced by these norms same. It is evident that $\mathcal{A}^2 =$ span$\{ab : a, b \in \mathcal{A}\}$ forms a subalgebra of $\mathcal{\mathcal{A}}$. In some cases, it may happen that $\mathcal{A}^2 = \mathcal{A}$.

It is a well-known fact that a vector space admits either a unique linear norm (up to equivalence) or infinitely many linear norms (up to equivalence). So, it is natural to ask for the case of algebra. It is surprising that the behaviour of submultiplicative norm on algebra is extremely different. From the following references~[\cite{DL:97}, \cite{DP:22(a)}]. So far there are three types of classification of an algebra norm. These are (i) there is an algebra which has no any algebra norm, e.g. $C(\mathbb{R})$ The set of all continuous functions on $\mathbb{R}$ with usual operations and pointwise multiplication, (ii) there is an algebra which has a unique algebra norm, e.g. $B(H)$ (iii) there is an algebra which has infinitely many algebra norms, e.g., the disc algebra $A(\mathbb{D})$. From these classification it naturaly leads the following question: What about an algebra having only finitely many more than one norms !!!. Also this question was asked in author's doctorate thesis. If such kind of an algebra exist, then the codimension $\mathcal{A}/ \mathcal{A}^2$ is finite can be conclude from the Corollary~\ref{Infinitely Many Norms}.

Dales and Loy \cite{DL:97} gave a nice example of an algebra with a finite-dimensional radical, and built two different algebra norms on it. Encouraged by them, the current paper generalizes the result further with a proof of the fact that not only two but actually an infinite number of inequivalent algebra norms can be constructed on such an algebra.

\section{Main results}
\noindent Throught $\mathcal{A}$ is an infinite dimensional algebra.

\begin{defn}
	Let $(\mathcal{A}, \| \cdot \|)$ be a normed algebra. An algebra $\mathcal{A}$ has the discontinuous square annihilation property (DSAP) if there exists a   discontinuous linear functional $\varphi$ on $\mathcal{A}$ such that $\mathcal{A}^2 \subseteq $ ker$\varphi$.
\end{defn}

\begin{prop}
	Let $(\mathcal{A}, \| \cdot \|)$ be a normed algebra. Then  if $\mathcal{A}$ right (left) unital or $\mathcal{A}$ has bounded approximate identity. Further, if 
\end{prop}

\begin{proof}
	In both the cases $\mathcal{A} = \mathcal{A}^2$. Hence, every linear functional having $\mathcal{A}^2 \text{ker}\phi$ is identically zero. Now, for converse consider $c_{00}$ with pointwise linear, scalar, and multiplication operations. Then $(c_{00}, \| \cdot \|_1)$ is normed algebra. Now it's clear that $c_{00}^2= c_{00}$ but it neither have right (left) identity nor bounded approximate identity.
\end{proof}

\begin{lem}
	Let $(\mathcal{A}, \| \cdot \|)$ be a normed algebra. Let $ \varphi$ be a linear functional on $\mathcal{A}$ such that the $\mathcal{A}^2 \subseteq $ker$ (\varphi)$. For each $a \in \mathcal{A}$, define $$p(a) = \| a \| + |\varphi(a)| .$$ Then
	\begin{enumerate}
		\item $p(\cdot)$ is an algebra norm on $\mathcal{A}$.
		\item  $\varphi$ is continuous on $\mathcal{A}$ iff $p(\cdot) \cong \| \cdot \|$.
	\end{enumerate}
\end{lem}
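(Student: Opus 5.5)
The plan is to verify the norm axioms directly, using the hypothesis $\mathcal{A}^2 \subseteq \ker(\varphi)$ only for submultiplicativity, and then compare $p$ with $\| \cdot \|$ through the two obvious inequalities.

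For part (1), linearity and the triangle inequality for $p$ come from those of $\| \cdot \|$ and of $|\varphi(\cdot)|$, since $\varphi$ is linear. For homogeneity, $p(\lambda a) = |\lambda|\,\|a\| + |\lambda|\,|\varphi(a)| = |\lambda| p(a)$. Definiteness holds because $p(a) \geq \|a\|$, so $p(a)=0$ forces $a=0$. The key step is submultiplicativity. Since $ab \in \mathcal{A}^2 \subseteq \ker\varphi$, we have $\varphi(ab)=0$, and therefore
\[
p(ab) = \|ab\| \leq \|a\|\,\|b\| \leq p(a)\,p(b).
\]
The last inequality holds because $\|a\| \leq p(a)$ and $\|b\| \leq p(b)$, with all quantities nonnegative. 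This is the only place where the hypothesis on $\varphi$ is used, and it is exactly what makes $p$ an algebra norm rather than just a linear norm.

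For part (2), note first that $\|a\| \leq p(a)$ for every $a$. So the $p$-topology is always at least as fine as the $\|\cdot\|$-topology, and equivalence reduces to finding $C>0$ with $p(a) \leq C\|a\|$.

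Suppose $\varphi$ is continuous. Then there is $M$ with $|\varphi(a)| \leq M\|a\|$, so $p(a) \leq (1+M)\|a\|$, and the norms are equivalent.

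Conversely, suppose $p \cong \|\cdot\|$, i.e.\ the two norms induce the same topology. Since both are norms, the identity map $(\mathcal{A},\|\cdot\|) \to (\mathcal{A},p)$ is a continuous linear map, so there is $C$ with $p(a) \leq C\|a\|$. Then $|\varphi(a)| \leq p(a) \leq C\|a\|$, so $\varphi$ is bounded, hence continuous. Here we use the standard fact that, for linear maps between normed spaces, a continuous identity map is the same as a bounded one.

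No step is a genuine obstacle. The only point needing care is reading ``$\cong$'' as topological equivalence and turning it into the inequality $p \leq C\|\cdot\|$ via continuity of the identity map, since the paper defines equivalence through the metric topologies.
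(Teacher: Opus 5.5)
Your proof is correct and is the routine verification the paper has in mind when it says the proofs are ``very easy''. In particular, your key step $p(ab)=\|ab\|\le\|a\|\,\|b\|\le p(a)\,p(b)$, using $\varphi(ab)=0$, is the same computation the paper carries out for $p_n$ in the proof of Corollary~\ref{Infinitely Many Norms}, and the two-sided comparison $\|\cdot\|\le p\le(1+M)\|\cdot\|$ is the standard argument for part (2).
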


\begin{proof}
	The proofs are very easy for both cases.
\end{proof}

\begin{thm}
	Let $(\mathcal{A}, \| \cdot \|)$ be a normed algebra. The codimension of $\mathcal{A}^2$ in $\mathcal{A}$ is infinite iff A has DAP.
\end{thm}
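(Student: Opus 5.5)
The proof splits into two directions, linked by the factorisation through the quotient space $\mathcal{A}/\mathcal{A}^2$. I read ``DAP'' in the statement as the discontinuous square annihilation property (DSAP) of the Definition above. Let $\pi\colon \mathcal{A}\to \mathcal{A}/\mathcal{A}^2$ be the quotient map. A linear functional $\varphi$ on $\mathcal{A}$ satisfies $\mathcal{A}^2\subseteq \ker\varphi$ exactly when $\varphi=\psi\circ\pi$ for a unique linear functional $\psi$ on $\mathcal{A}/\mathcal{A}^2$. So the functionals in the Definition of DSAP correspond bijectively to the algebraic dual of $\mathcal{A}/\mathcal{A}^2$. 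Both directions will be read off from this correspondence.

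\emph{Infinite codimension $\Rightarrow$ DSAP.} Suppose $\dim \mathcal{A}/\mathcal{A}^2=\infty$. I choose inductively $x_1,x_2,\dots\in\mathcal{A}$ with $\|x_n\|=1$ whose images $\pi(x_n)$ are linearly independent; this is possible because the codimension is infinite. Let $W$ be a complement of $\mathcal{A}^2+\operatorname{span}\{x_n:n\in\mathbb{N}\}$ in $\mathcal{A}$. Then
\[
\mathcal{A}=\mathcal{A}^2\oplus \operatorname{span}\{x_n\}\oplus W
\]
as vector spaces, and $\varphi$ is well defined by
\[
\varphi|_{\mathcal{A}^2}=0,\qquad \varphi(x_n)=n,\qquad \varphi|_W=0 .
\]
It is linear, kills $\mathcal{A}^2$, and satisfies $|\varphi(x_n)|=n\to\infty$ while $\|x_n\|=1$. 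Hence $\varphi$ is unbounded, i.e.\ discontinuous, and $\mathcal{A}$ has DSAP. Combined with the Lemma above, this also gives the norm $p(a)=\|a\|+|\varphi(a)|$, which is not equivalent to $\|\cdot\|$. Replacing $n$ by other growth rates, and comparing the resulting norms, should yield the infinitely many non-equivalent norms of the Corollary; that is not needed for the present statement.

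\emph{DSAP $\Rightarrow$ infinite codimension.} I argue contrapositively. Suppose $\mathcal{A}/\mathcal{A}^2$ is finite-dimensional, with basis $\pi(y_1),\dots,\pi(y_k)$. Every admissible $\varphi=\psi\circ\pi$ is then determined by the finitely many scalars $\varphi(y_i)$. The goal is to show each such $\varphi$ is bounded. The plan is to show that $\pi$ is continuous when $\mathcal{A}/\mathcal{A}^2$ carries the quotient seminorm
\[
\|\pi(a)\|=\inf\{\|a-b\|: b\in\mathcal{A}^2\}.
\]
Any linear functional on a finite-dimensional \emph{normed} space is continuous, so $\psi$ would then be bounded, and hence so would $\varphi$.

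\emph{Main obstacle.} The step above needs the quotient seminorm to be a genuine norm, that is, $\mathcal{A}^2$ must be closed in $(\mathcal{A},\|\cdot\|)$. I would first try to deduce closedness from finite codimension together with submultiplicativity. However, I expect this to fail in general. If $\mathcal{A}^2$ has codimension one and is dense, then the functional with kernel exactly $\mathcal{A}^2$ is discontinuous while the codimension is $1$. So I expect the converse to go through only when $\mathcal{A}^2$ is closed, or when ``codimension'' is read as the codimension of $\overline{\mathcal{A}^2}$. For that reason I would check the literal statement against such an example before committing to this direction.
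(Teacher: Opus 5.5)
Your forward direction is correct and is the paper's construction: take normalized $x_n$ independent modulo $\mathcal{A}^2$, set $\varphi(x_n)=n$, and put $\varphi=0$ on $\mathcal{A}^2$. The paper sets $\varphi=1$ on the remaining Hamel basis vectors where you use $0$ on a complement; that difference is immaterial. Your requirement that the $\pi(x_n)$ be linearly independent is the right one. The paper's ``$\mathcal{A}^2\cap L=\emptyset$'' is too weak for $\varphi$ to be well defined.

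For the converse, your suspicion is justified: that direction is false. The paper's proof breaks exactly where you say it would. It asserts that $\tilde\phi$ on the finite-dimensional space $\mathcal{A}/\mathcal{A}^2$ is continuous, and it also invokes $\ker\phi=\mathcal{A}^2$, which is not part of the hypothesis. Automatic continuity on a finite-dimensional space needs the quotient seminorm to be a norm, i.e.\ $\mathcal{A}^2$ closed.

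What your write-up lacks is an actual algebra realizing your scenario, and one is easy to give. Take $\mathcal{A}=\{p\in\mathbb{C}[x]: p(0)=0\}$, the paper's final example with $n=1$, normed by $\|p\|=\sup_{0\le t\le 1}|p(t)|$, which is submultiplicative. Then:
\begin{enumerate}
\item $\mathcal{A}^2=x^2\mathbb{C}[x]$ has codimension $1$, and $\varphi(p)=p'(0)$ vanishes on $\mathcal{A}^2$.
\item For $u_n=x(1-x)^n\in\mathcal{A}$ we have $\varphi(u_n)=1$ but $\|u_n\|=\max_{0\le t\le 1}t(1-t)^n\le 1/(n+1)$.
\item Hence $\varphi$ is unbounded and $\mathcal{A}$ has DSAP, although the codimension of $\mathcal{A}^2$ is $1$.
\end{enumerate}
Since $x-u_n=x\bigl(1-(1-x)^n\bigr)\in\mathcal{A}^2$, the subalgebra $\mathcal{A}^2$ is dense. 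The quotient seminorm on $\mathcal{A}/\mathcal{A}^2$ is therefore identically zero, which is precisely why $\tilde\phi$ fails to be continuous.

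The correct statement is that $\mathcal{A}$ has DSAP iff $\mathcal{A}^2$ is either not closed or of infinite codimension. If $x\in\overline{\mathcal{A}^2}\setminus\mathcal{A}^2$, any linear $\varphi$ with $\varphi|_{\mathcal{A}^2}=0$ and $\varphi(x)=1$ is discontinuous. If $\mathcal{A}^2$ is closed and of finite codimension, your quotient argument goes through.

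So your first proposed repair, adding the hypothesis that $\mathcal{A}^2$ is closed, is right. Your second, reading ``codimension'' as that of $\overline{\mathcal{A}^2}$, does not rescue the converse. In your own dense scenario, and in the example above, $\overline{\mathcal{A}^2}=\mathcal{A}$ has codimension $0$, yet DSAP holds.
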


\begin{proof}
	Let $(\mathcal{A}, \| \cdot \|)$ be a normed algebra. Since $\mathcal{A}^2$ has infinite co-dimension in $\mathcal{A}$, there exists a countably infinite linearly independent subset $L = \{a_1, a_2,a_3, \ldots\}$ of $\mathcal{A}$ such that $\mathcal{A}^2 \cap L = \phi$ and $\| a \| =1 \; (a \in L)$. 
	
	Let $D$ be a basis of $\mathcal{A}^2$. Now, consider the (unique) linear map $\varphi : \mathcal{A} \longrightarrow \mathbb{C}$ such that
	\[
	\varphi(a)=
	\begin{cases}
		k&  (\text{if } a=a_{k}  \in L \});\\
		1 & (\text{if } a \in B \setminus (L \cup D)); \\
		0 & (\text{if } a \in D \}).
	\end{cases}
	\]
	Now $a \in \mathcal{A}$, define
	$$p(a)=\|a\|+ |\varphi(a)|.$$
	Then $(\mathcal{A}, p( \cdot ))$ is a normed algebra.
	Clearly, each $p(\cdot)$ is a linear norm. Let $a, b \in \mathcal{A}$. Then $p(ab)  = \|ab\| + |\varphi(ab| \leq p(a)p(b)$.Thus $p(\cdot)$ is submultiplicative. Now, we \textit{claim} that $\| \cdot \|$ and $p(\cdot)$are non-equivalent on $\mathcal{A}$. Let $a_{k} \in L \ (k \in \mathbb{N})$. Then, for $k \geq 2, \; p(a_k) = 1+k$ and $\|a_k\| = 1$. Thus $\| \cdot \|$ is not equivalent to $p(\cdot)$. Hence $p(\cdot)$ is inequivalent to $\| \cdot \|$.
	Conversely assume that $\mathcal{A}$ has DAP, so there is discontinuous linear functional $\phi: \mathcal{A} \longrightarrow \mathbb{C}$ such that $\mathcal{A}^2 \subset \text{Ker} \phi$. Now suppose the codimension of $\mathcal{A}^2$ in $\mathcal{A}$ is finite, the embeeding map $\tilde{\phi}: \mathcal{A}/ \mathcal{A}^2 \longrightarrow \mathbb{C}$ defined by $\tilde{\phi}(x+ \mathcal{A}^2)= \phi(x)$ is continuous. Define another map say $\psi: \mathcal{A} \longrightarrow \mathcal{A}/\mathcal{A}^2$, $$\psi(x)= x+ \mathcal{A}^2$$ is continuous as ker$\phi= \mathcal{A}^2$. Hence $\phi$ is continuous as $\phi= \tilde{\phi} \circ \psi$.
\end{proof}
 
 \begin{cor}{\label{Infinitely Many Norms}}
 	Let $\mathcal{A}$ be a normed algebra, and let the codimension of $\mathcal{A}^2$ in  $\mathcal{A}$ is infinite. Then $\mathcal{A}$ admits infinitely many algebra norm.
 \end{cor}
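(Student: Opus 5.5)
The plan is to build a family of functionals $\varphi_1,\varphi_2,\ldots$ on $\mathcal{A}$, each vanishing on $\mathcal{A}^2$, with incomparable growth along a fixed sequence. The corresponding norms of the form in the Lemma will then be pairwise inequivalent algebra norms.

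\textbf{Setup.} Since $\mathcal{A}^2$ has infinite codimension, I first choose a linearly independent sequence $(a_k)$ whose span meets $\mathcal{A}^2$ only in $0$. I normalise it so that $\|a_k\|=1$, and extend $D\cup\{a_k\}$ to a Hamel basis $B$ of $\mathcal{A}$, where $D$ is a basis of $\mathcal{A}^2$.

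\textbf{Step 1 (the functionals).} I would partition $\mathbb{N}$ into infinitely many disjoint infinite sets $N_1,N_2,\ldots$. For each $j$, define the linear functional $\varphi_j$ on $B$ by $\varphi_j(a_k)=k$ for $k\in N_j$, and $\varphi_j(b)=0$ for every other basis element $b$, including all of $D$. Then $\mathcal{A}^2\subseteq\ker\varphi_j$, and $\varphi_j$ is unbounded on the unit vectors $a_k$ with $k\in N_j$, so $\varphi_j$ is discontinuous.

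\textbf{Step 2 (the norms).} Put $p_j(a)=\|a\|+|\varphi_j(a)|$. By part (1) of the Lemma, each $p_j$ is an algebra norm. The point is that $\varphi_j(ab)=0$, so
$$p_j(ab)=\|ab\|\le\|a\|\,\|b\|\le p_j(a)p_j(b).$$
By part (2) of the Lemma, or directly from the values on the $a_k$, each $p_j$ is inequivalent to $\|\cdot\|$.

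\textbf{Step 3 (pairwise inequivalence).} This is the step needing care. Take $i\neq j$ and $k\in N_i$. Then
$$p_i(a_k)=1+k \quad\text{and}\quad p_j(a_k)=1,$$
since $k\notin N_j$. As $k$ runs through the infinite set $N_i$, the ratio $p_i(a_k)/p_j(a_k)$ is unbounded. Hence no constant $C$ satisfies $p_i\le Cp_j$, so $p_i$ and $p_j$ induce different topologies.

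\textbf{Conclusion.} Together with $\|\cdot\|$ itself, this gives infinitely many pairwise non-equivalent algebra norms on $\mathcal{A}$. The only genuine obstacle is making the norms mutually inequivalent, not just inequivalent to $\|\cdot\|$. The disjoint partition of the index set handles this, since each functional "sees" a different subsequence of the $a_k$ on which the others vanish.
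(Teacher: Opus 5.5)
Your proof is correct and follows the paper's argument. You split a sequence of unit vectors that is independent modulo $\mathcal{A}^2$ into infinitely many infinite blocks, let $\varphi_n$ vanish on $\mathcal{A}^2$ and grow like $k$ on the $n$-th block, and compare the norms $\|\cdot\|+|\varphi_n|$ along that block. Your single Hamel basis $B\supseteq D\cup\{a_k\}$, with the span of the $a_k$ meeting $\mathcal{A}^2$ only in $0$, is the cleaner execution. The paper instead picks separate bases $B_n\supseteq L_n\cup D$, which need not contain $a_{mk}$ for $m\neq n$, so its claim $\varphi_n(a_{mk})\in\{0,1\}$ is not justified as written; in your setup $\varphi_j(a_k)=0$ for $k\notin N_j$ holds by construction.
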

 
 \begin{proof} Let $(\mathcal{A}, \| \cdot \|)$ be a normed algebra. Since $\mathcal{A}^2$ has infinite co-dimension in $\mathcal{A}$, there exists a countably infinite linearly independent subset $L$ of $\mathcal{A}$ such that $\mathcal{A}^2 \cap L = \phi$ and $\| a \| =1 \; (a \in L)$. For each $n \in \mathbb{N}$, choose $L_n = \{ a_{n1}, a_{n2}, \ldots \} \subset L$ such that: 
 	\begin{enumerate}
 		\item Each $L_n$ is infinite;
 		\item $L_n \cap L_m = \phi \ (n \neq m)$;
 		\item  $L = \bigcup_{n=1}^{ \infty} L_n$.
 	\end{enumerate}
 	Let $D$ be a basis of $\mathcal{A}^2$. Let $B_n$ be a (Hamel) basis of $\mathcal{A}$ such that $L_n \cup D \subset B_n$ for each $n \in \mathbb{N}$. Now, consider the (unique) linear map $\varphi_n : \mathcal{A} \longrightarrow \mathbb{C}$ such that
 	\[
 	\varphi_n(a)=
 	\begin{cases}
 		k&  (\text{if } a=a_{nk}  \in L_n \setminus \{a_{n1}\});\\
 		1 & (\text{if } a \in B_n \setminus (L_n \cup D)); \\
 		0 & (\text{if } a \in D \cup \{a_{n1}\}).
 	\end{cases}
 	\]
 	Now for $a \in \mathcal{A}$, define
 	$$p_n(a)=\|a\|+ |\varphi_n(a)|.$$
 	Then $(\mathcal{A}, p_n( \cdot ))$ is a normed algebra.
 	Clearly, each $p_n(\cdot)$ is a linear norm. Let $a, b \in \mathcal{A}$. Then $p_n(ab)  = \|ab\| \leq p_n((a, \alpha))p_n((b, \beta))$ because $ab \in \mathcal{A}^2$ and hence $\varphi_n(ab) = 0$. Thus each $p_n(\cdot) \in N(\mathcal{B})$. Now, we \textit{claim} that these norms are non-equivalent on $\mathcal{A}$. Let $m < n$ and $g_k = a_{mk} \ (k \in \mathbb{N})$. Then, for $k \geq 2, \; p_m((a_k, 0)) = 1+k$ and $p_n((a_k, 0)) \leq 2$ because $\varphi_n(a_{mk}) = 0 \text{ or 1}$. Thus we have proved our claim.
 \end{proof}
 
\begin{cor}
	The norms $\| \cdot \|$ and $p_n(\cdot)$ are inequivalent even $(\mathcal{A}, \| \cdot \|)$ is a Banach algebra.
\end{cor}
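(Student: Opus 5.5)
The plan is to show directly that the identity map $(\mathcal{A}, \|\cdot\|) \to (\mathcal{A}, p_n(\cdot))$ is unbounded. This comparison uses only the vectors of $L_n$ and never uses completeness, so the Banach hypothesis does not get in the way. Fix $n$ and use the construction in the proof of Corollary~\ref{Infinitely Many Norms}: $L_n=\{a_{n1},a_{n2},\ldots\}\subset L$ with $\|a_{nk}\|=1$, and $\varphi_n(a_{nk})=k$ for $k\ge 2$. Then for $k \ge 2$
\[
p_n(a_{nk}) = \|a_{nk}\| + |\varphi_n(a_{nk})| = 1+k ,
\]
while $\|a_{nk}\|=1$. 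Hence there is no constant $C$ with $p_n(a)\le C\|a\|$ for all $a\in\mathcal{A}$. This gives inequivalence, since $\|\cdot\| \le p_n(\cdot)$ always holds and the reverse inequality fails.

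The remaining point is that the construction is legitimate when $(\mathcal{A},\|\cdot\|)$ is a Banach algebra. We need an infinite linearly independent set $L$ of unit vectors whose span meets $\mathcal{A}^2$ only in $0$. This follows from the infinite codimension hypothesis by choosing a Hamel complement of $\mathcal{A}^2$ and normalising. We then extend $L_n\cup D$ to a Hamel basis $B_n$. The Banach property plays no role here, because Hamel bases exist by Zorn's lemma in any vector space. Lemma~(1), or the computation in the corollary's proof using $\varphi_n(\mathcal{A}^2)=0$, shows that $p_n$ is an algebra norm. Lemma~(2) then translates the inequivalence into the equivalent statement that $\varphi_n$ is discontinuous on the Banach algebra.

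The step I expect to be the main obstacle is conceptual rather than computational. One might worry that completeness forces uniqueness of norm topologies, as in Johnson's theorem for semisimple Banach algebras. Two points answer this. First, $(\mathcal{A},p_n)$ need not be complete, so no uniqueness theorem about complete norms applies. Second, $\mathcal{A}^2$ having infinite codimension already rules out the standard hypotheses of those results, such as a unit or a bounded approximate identity; the Proposition shows that either forces $\mathcal{A}=\mathcal{A}^2$.

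I would also remark that, in the Banach case, $\|\cdot\|$ and $p_n$ cannot both be complete. If they were, the open mapping theorem applied to the continuous identity $(\mathcal{A},p_n)\to(\mathcal{A},\|\cdot\|)$ would make them equivalent, contradicting the computation above.
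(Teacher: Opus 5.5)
Your proof is correct and is essentially the argument the paper leaves implicit. The construction in Corollary~\ref{Infinitely Many Norms} never uses completeness, and, as in the proof of the preceding theorem, $p_n(a_{nk})=1+k$ against $\|a_{nk}\|=1$ shows that $p_n$ is not dominated by $\|\cdot\|$; your requirement that the span of $L$ meet $\mathcal{A}^2$ only in $0$ also correctly strengthens the paper's condition $\mathcal{A}^2\cap L=\emptyset$, which by itself would not let $L_n\cup D$ be extended to a Hamel basis.
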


\begin{thm}
	Let $\mathcal{A}$ be an algebra having only finitely many norms.
	\begin{enumerate}
		\item Then $\mathcal{A}$ has maximum and minimum norms. 
		\item Then $\mathcal{A}$ has atmost one complete norm.
		\item The codimension of $\mathcal{A}^2$ in $\mathcal{A}$ is finite.
	\end{enumerate}
	 
\end{thm}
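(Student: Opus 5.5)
The plan is to prove the three parts in the order (3), (1), (2). Part (2) will use the minimum norm produced in (1). Throughout, ``finitely many norms'' means finitely many equivalence classes of algebra norms, and $\mathcal{A}$ carries at least one norm.

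\emph{Part (3).} This is the contrapositive of Corollary~\ref{Infinitely Many Norms}. If $\mathcal{A}^2$ had infinite codimension, that corollary would give an infinite family $\{p_n\}$ of pairwise inequivalent algebra norms. This contradicts the hypothesis.

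\emph{Part (1).} Let $p_1,\dots,p_N$ be representatives of the equivalence classes.
\begin{enumerate}
\item For the maximum, set $M=\max_i p_i$. It is a linear norm, and it is submultiplicative because
\[
p_i(ab)\le p_i(a)p_i(b)\le M(a)M(b)
\]
for every $i$. Hence $M$ is an algebra norm, so it is equivalent to some $p_k$. Since $M\ge p_i$ for all $i$, every norm is dominated by a multiple of $p_k$, and $p_k$ is the maximum.
\item For the minimum, $\min_i p_i$ is not subadditive, so a different construction is needed. I would first try the Minkowski functional $r$ of the absolutely convex hull $\Gamma$ of $\bigcup_i B_{p_i}$, where $B_{p_i}$ is the open unit ball of $p_i$. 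Then $r\le p_i$ for each $i$. Moreover $r$ is a genuine norm, since $r\ge$ any linear norm dominated by all the $p_i$; to get this, I would use that the order on the finitely many classes is a finite poset and compare minimal elements.
\end{enumerate}

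The main obstacle is submultiplicativity of $r$. A product $xy'$ with $x\in B_{p_i}$ and $y'\in B_{p_j}$, $i\ne j$, need not lie in $\Gamma$. My first attempt would be to replace $\Gamma$ by the absolutely convex hull of the multiplicative semigroup generated by $\bigcup_i B_{p_i}$. Submultiplicativity of the resulting functional is then automatic. The point to verify is that this functional is still a norm and not merely a seminorm. If this fails, I would settle for showing that there is a unique minimal class: suppose two minimal classes $p,q$ exist, build from them a norm strictly below both, and obtain a contradiction.

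\emph{Part (2).} Suppose $p$ and $q$ are complete norms, and let $m$ be the minimum norm from (1), with $m\le Cp$ and $m\le Cq$.
\begin{enumerate}
\item First I would show that $s=\max(p,q)$, which is an algebra norm as in (1), is complete. Take an $s$-Cauchy sequence $(x_n)$. It is Cauchy for $p$ and for $q$, so $x_n\to x$ in $p$ and $x_n\to y$ in $q$.
\item Then $m(x_n-x)\to0$ and $m(x_n-y)\to0$. Since $m$ is a norm, $x=y$, so $x_n\to x$ in $s$ and $s$ is complete.
\item The identity map $(\mathcal{A},s)\to(\mathcal{A},p)$ is a continuous bijection between Banach spaces. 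By the open mapping theorem, $s\cong p$, and by the same argument $s\cong q$. Hence $p\cong q$, so $\mathcal{A}$ has at most one complete norm up to equivalence.
\end{enumerate}
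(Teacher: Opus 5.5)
Your part (3) is the paper's argument (the contrapositive of Corollary~\ref{Infinitely Many Norms}). Your maximum $M=\max_i p_i$ is correct, and it improves on the paper, whose proof of (1) simply asserts without justification that the finitely many norms form a chain.

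\textbf{The gap: the minimum in (1).} Neither of your candidates is shown to be a norm, and the reason you give for $r$ is circular. Indeed $r(x)=\inf\{\sum_i p_i(x_i): x=\sum_i x_i\}$ is the largest seminorm below all the $p_i$. So it is a norm exactly when \emph{some} linear norm lies below all of them, and that is what has to be proved. ``Comparing minimal elements'' cannot supply this: the hypothesis concerns only classes of \emph{algebra} norms, and a finite poset can have several minimal elements with no common lower bound.

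This is a real obstruction, not a technicality. On $\mathbb{C}[X]$ take the algebra norms $p(f)=\sup_{[0,1]}|f|$ and $q(f)=\sup_{[2,3]}|f|$.
\begin{enumerate}
\item A functional continuous for both is integration against a measure on $[0,1]$ and also against a measure on $[2,3]$.
\item The difference of these two measures annihilates the polynomials, which are dense in $C([0,1]\cup[2,3])$, so both measures vanish.
\item By Hahn--Banach, every seminorm dominated by multiples of $p$ and $q$ is therefore $0$.
\end{enumerate}
So two algebra norms need not have any common minorant. Any proof of the minimum must use the finiteness hypothesis in an essential way, and neither your proposal nor the paper does so.

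\textbf{The semigroup functional $\rho$ only restates the problem.} It is a submultiplicative seminorm. Suppose an algebra norm $m$ satisfies $m\le p_i$ for all $i$ (which can be arranged by multiplying the $p_i$ by constants $\ge 1$). The open unit ball of $m$ is absolutely convex, multiplicatively closed and contains every $B_{p_i}$, hence contains the whole hull of the semigroup, so $\rho\ge m$. Conversely, if $\rho$ is a norm it is itself such an $m$. So ``verifying that $\rho$ is a norm'' is equivalent to the statement you are proving. Your fallback, building a norm strictly below two minimal classes, is the same problem again with no construction given.

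\textbf{Part (2).} Your argument is sound once a common minorant of $p$ and $q$ exists. In fact it needs only a Hausdorff linear topology coarser than both: then $\max(p,q)$ is complete, and the open mapping theorem gives $p\cong q$. This is more careful than the paper's appeal to the two-norm theorem, which presupposes that $p$ and $q$ are comparable. But without the minimum from (1), the step $x=y$ has no support. So (2), like the minimum half of (1), remains unproved.
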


\begin{proof}
	(i) Let  $\| \cdot \|_i \ \ (1 \leq i \leq n)$ are norms on $\mathcal{A}$, then they form a sequence like $ \| \cdot \|_i \leq \ldots \leq  \| \cdot \|_j  \leq \ldots \leq \| \cdot \|_k$ for some $1 \leq i, j, k, \leq n$, where $\| \cdot \|_i$ is the smallest and $\| \cdot \|_k$ is the biggest norm.

	(ii) It follows from two norm theorem.
	
	(iii) It follows from Corollary~\ref{Infinitely Many Norms}.
\end{proof}

\section{Examples}
\begin{exam}
	Let $\mathcal{A} = \ell^2$ with pointwise operations. Now $(\ell^2)^2= \ell^1$. The codimension of $\ell^2/ \ell^1$ is infinite. So, by the Corollary~ \ref{Infinitely Many Norms}, $\mathcal{A}$ has infinitely many norms.
\end{exam}

\begin{exam}
	Let $A = A(\mathbb{D})$ be a disc algebra. Then it shown that it has infinitely many norms. So, this example says that the converse of the Corollary~\ref{Infinitely Many Norms} is not true as $A(\mathbb{D})$ is unital algebra.
\end{exam}

\begin{exam}
	Let $(\mathcal{A}, \| \cdot \|)$ be an algebra. Then $\mathcal{A} \times \mathbb{C}$ is an algebra with pointwise operations and multiplication $(a, \alpha) \times (b, \beta) = (ab,0)$ for $((a,\alpha), (b, \beta) \in \mathcal{A} \times \mathbb{C})$. It was shown that $\mathcal{A} \times \mathbb{C}$ has infinitely many norms.
\end{exam}

\begin{exam}
	Let $\mathbb{F}[x]= \{p(x)= a_0+ a_1x+ \ldots+ a_n x^n : a_0, a_1, \ldots+ a_n \in \mathbb{F}\}$. Consider $\mathcal{A}= \{p(x) \in \mathbb{F}[x] : p^{i}(0)=0, (0 \leq i \leq n-1) \}$. Then $\mathcal{A}$ is an algebra with the usual linear and scalar multiplication, and pointwise multiplication opearions. Now we can easily conclude that if $p(x) \in \mathcal{A}$, then $p(x)=x^n q(x)$, for some $q(x) \in \mathbb{F}[x]$. So, $\mathcal{A}^2= x^n \mathcal{A}$. Hence the co-dimension of $\mathcal{A}^2$ in $\mathcal{A}$ is n, as $\{1,x, x^2, \ldots, x^{n-1}\}$ is basis of $\mathcal{A}/\mathcal{A}^2$.
\end{exam}

\end{document}